\theoremstyle{plain}
\newtheorem{thm}{Theorem}
\newtheorem*{thm*}{Theorem}
\newtheorem{prop}[thm]{Proposition}
\newtheorem{lem}[thm]{Lemma}
\newtheorem{cor}[thm]{Corollary}
\newtheorem*{prb*}{Problem}
\theoremstyle{definition}
\newtheorem{rem}[thm]{Remark}
\newtheorem{df}[thm]{Definition}
\theoremstyle{remark}
\newtheorem{ex}[thm]{Example}
\newcommand\Defn[1]{\textbf{#1}}
\newcommand{\cm}[1]{}
\newcommand\mr[1]{\mathrm{#1}}
\newcommand\wt[1]{\widetilde{#1}}
\renewcommand{\k}{\mathbbm{k}}
\newcommand{\dps}{\mathrm{depth}\,}
\newcommand{\matteo}[1]{{\color{OrangeRed} \sf $\clubsuit\clubsuit\clubsuit$ Matteo: [#1]}}
\newcommand{\afshin}[1]{{\color{Green} \sf $\clubsuit\clubsuit\clubsuit$ Afshin: [#1]}}
\newcommand*\longhookrightarrow{\ensuremath{\lhook\joinrel\relbar\joinrel\rightarrow}}
\title{Connectivity of pseudomanifold graphs from an algebraic point of view}
\author{Karim A.~Adiprasito}
\author{Afshin Goodarzi}
\author{Matteo Varbaro}
\address{Institut des Hautes \'Etudes Scientifiques, Bures-sur-Yvette, France, \emph{and}
Einstein Institute for Mathematics, Hebrew University of Jerusalem, Jerusalem, Israel}
\email{adiprasito@ihes.fr, adiprssito@math.fu-berlin.de}
\address{Department of Mathematics, Kungliga Tekniska H\"ogskolan,  S-100 44 Stockholm, Sweden}
\email{afshingo@math.kth.se}
\address{Dipartimento di Matematica, Universit\`a di Genova Via Dodecaneso 35-16146, Genova, Italy}
\email{varbaro@dima.unige.it}
\date{\today}
\thanks{K.~Adiprasito was supported by an EPDI/IPDE postdoctoral fellowship
and a Minerva fellowship of the Max Planck Society.}
\thanks{M.~Varbaro was supported by PRIN  2010S47ARA\_003 ``Geometria delle Variet\`a Algebriche".}
\begin{document}

\begin{abstract}
The connectivity of graphs of simplicial and polytopal complexes is a classical subject going back at least to Steinitz, and the topic has since been studied by many authors, including Balinski, Barnette, Athanasiadis and Bj\"orner. In this note, we provide a unifying approach which allows us to obtain more general results. Moreover, we provide a relation to commutative algebra by relating connectivity problems to graded Betti numbers of the associated Stanley--Reisner rings. 
\end{abstract}
\maketitle


\section{Connectivity of the underlying graph}
Let $\Delta$ be a finite simplicial complex on the vertex set $[n]:=\{1,\ldots ,n\}$. The \Defn{underlying graph} (or \Defn{$1$-skeleton}) $G_\Delta$ of $\Delta$ is the graph obtained by restricting $\Delta$ to faces of cardinality at most two. 

A graph $G$ is said to be \Defn{$k$-connected} if it has at least $k$ vertices and removing any subsets of vertices of cardinality less than $k$ results in a connected graph. The \Defn{(vertex-)connectivity} $\kappa_G$ of $G$ is the maximum number $k$ such that $G$ is $k$-connected.

The classical Steinitz's theorem \cite{steinenc} asserts that a graph $G$ is the underlying graph of a $3$-polytope if and only if $G$ is $3$-connected and planar. In 1961, Balinski extended the ``only if'' direction of  Steinitz's theorem by showing that the underlying graph of a $d$-polytope is $d$-connected, cf.\ \cite{Z}. David Barnette showed that the same bound is also valid for the connectivity number of underlying graphs of $(d-1)$-dimensional pseudomanifolds \cite{Barnette2}.

Athanasiadis \cite{Ath} showed that if the pseudomanifold is also flag (i.e.\ the clique complex of its $1$-skeleton), then this lower bound can be improved to $2d-2$. Bj\"orner and Vorwerk quantified this connection using the notion of banner complexes \cite{BV}. \\

The purpose of this note is to provide a unifying approach which allows us to obtain more general results.

The proof is inspired by a relation of connectivity to the Hochster's formula (observed in \cite{Goodarzi}) from commutative algebra and simple estimates for the size of certain flag complexes \cite{ACMF}.


\section{Basics in commutative algebra}
We start by recalling some notions, and refer to \cite{MS, HerzogHibi} for exact definitions and more details.
Let $I$ be a graded ideal in the polynomial ring $S=\k[x_1,\ldots,x_n]$ in $n$ variables over a field $\k$. Let 
\[\mathbf{F}_{S/I}\ :=\  0\rightarrow F_p\rightarrow F_{p-1}\rightarrow\cdots\rightarrow F_1\rightarrow F_0\rightarrow S/I\rightarrow 0,
\]
be the minimal graded free resolution of $S/I$, with $F_i=\bigoplus_j S(-j)^{b_{i,j}}$ in homological degree $i$. The number $b_{i,j}=b_{i,j}(S/I)$ is the \Defn{graded Betti number} of $S/I$ in \Defn{homological degree} $i$ and \Defn{internal degree} $j$. 
The length of the $j$-th row in the Betti table will be denoted by $\mathit{lp}_j(S/I)$, that is 
\[\mathit{lp}_j(S/I)\ :=\ \max\{i\mid b_{i,i+j-1}(S/I)\neq 0\}.\]
We also denote by $t_i(S/I)$ the \Defn{maximum internal degree} of a minimal generator in the homological degree $i$ that is $\max\{j\mid b_{i,j}\neq 0\}$. The \Defn{projective dimension} of $S/I$ is the maximum $i$ such that $b_{i,j}\neq 0$, for some $j$. The \Defn{regularity} of $S/I$ is defined to be $\max_i \{t_i(S/I)-i\}$. 

\section{Connectivity via graded Betti numbers}

Let $\Delta$ be a simplicial complex on the vertex set $[n]$. The \Defn{Stanley--Reisner ideal} $I_\Delta\subset S$ of $\Delta$ is the ideal generated by monomials $\mathbf{x}_F:=\prod_{i\in F} x_i$ for all $F$ not in $\Delta$. The quotient ring $\k[\Delta]=S/I_\Delta$ is called the \Defn{face ring} of $\Delta$. 
In this case, \Defn{Hochster's formula} provides an interpretation of the graded Betti numbers in terms of the reduced homology of induced sub-complexes of $\Delta$. More precisely, it asserts that 
\begin{equation*}\label{Hochster}
b_{i,j}(\k[\Delta])\ =\ \sum_{\# W=j} \dim_{\k}\wt{H}_{j-i-1} (\Delta_W).
\end{equation*}


In \cite[Theorem 3.1]{Goodarzi} it was observed that the graded Betti numbers computed in Hochster's formula are naturally connected to connectivity of the underlying graphs.
\begin{prop}\label{Connectivity-LStrand}
Let $\Delta$ be a simplicial complex on the vertex set $[n]$ and $\kappa$ be the connectivity number of its underlying graph. Then one has \[\kappa+\mathit{lp}_2(\k[\Delta])=n-1.\]
\end{prop}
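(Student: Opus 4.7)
The plan is to read off from Hochster's formula what the non-vanishing of $b_{i,i+1}(\k[\Delta])$ means combinatorially, and then match that directly with the graph-theoretic definition of $\kappa$.

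First I would apply Hochster's formula with $j = i+1$, which gives
\[
b_{i,i+1}(\k[\Delta]) \;=\; \sum_{\substack{W\subseteq [n]\\ |W|=i+1}} \dim_\k \wt{H}_0(\Delta_W).
\]
All terms being nonnegative, this is nonzero precisely when there is some $W \subseteq [n]$ of size $i+1$ for which $\Delta_W$ has at least two connected components. Since connectedness of a simplicial complex depends only on its $1$-skeleton, and $[n]$ is the vertex set of $\Delta$, $\Delta_W$ is disconnected if and only if the induced subgraph $G_\Delta[W]$ is disconnected.

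Next, setting $U := [n]\setminus W$, one has $G_\Delta[W] = G_\Delta - U$ and $|U| = n - |W|$. Taking the supremum over such $i = |W|-1$ rewrites the invariant as
\[
\mathit{lp}_2(\k[\Delta]) \;=\; n - 1 - \min\bigl\{\,|U|\;:\;G_\Delta - U \text{ is disconnected}\,\bigr\}.
\]
To finish I would invoke the definition of connectivity: $G_\Delta$ stays connected after the removal of any fewer than $\kappa$ vertices but becomes disconnected after removing some set of size $\kappa$ whenever $G_\Delta$ is not complete; this identifies the minimum above with $\kappa$ and yields $\kappa + \mathit{lp}_2(\k[\Delta]) = n-1$. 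The boundary case $G_\Delta = K_n$ (so $\kappa = n$ and no disconnecting $U$ exists) is covered by reading $\mathit{lp}_2(\k[\Delta])$ as $-1$ when no $b_{i,i+1}$ is nonzero, in which case the formula still holds.

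The argument is essentially a dictionary between the second linear strand of the Betti table and the graph-theoretic notion of a vertex cut, so I do not expect any genuinely hard step. The only care needed will be around small $|W|$, where $\wt{H}_0(\Delta_W)$ automatically vanishes (so that small cardinalities cannot spuriously contribute to the minimum), and around the complete-graph boundary case where conventions must be aligned.
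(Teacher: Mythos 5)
Your argument is correct and is essentially the paper's own (first) proof: the authors likewise observe, via Hochster's formula, that $\mathit{lp}_2(\k[\Delta])=\max\{\#W:\widetilde H_0(\Delta_W)\neq 0\}-1$ and that this maximum equals $n-\kappa$, which is exactly your complementation $U=[n]\setminus W$. You additionally spell out the complete-graph boundary case (conventions $\max\emptyset=-1$), which the paper leaves implicit.
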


\begin{proof}
It suffices to observe that $\kappa = n- \mathrm{max}\{\#W : \widetilde{H}_0(X_W) \neq 0\}=n-1-\mathit{lp}_2(\k[\Delta])$.

Alternatively (and algebraically) it suffices to observe that $\Delta$ and $\mr{Cl}(G)$, the clique complex of the underlying graph $G$ of $\Delta$, both have the same connectivity. Hence, it suffices to verify the result in the case of flag complexes and we may assume that $\Delta=\mr{Cl}(G)$. The result follows from ~\cite[Theorem~3.1]{Goodarzi}.
\end{proof}

Before presenting our next result, we shall introduce two properties. 
\begin{df} Let  $I$ be a graded ideal such that $S/I$ is of regularity $r$. Set $m= \mathit{lp}_r(S/I)$. We say $S/I$ satisfies the \Defn{property $\mathfrak{A}$} if 
\begin{compactenum}[(1)]
\item $\mathit{lp}_2(S/I)\leq m$,
\item $b_{m-i,m-i+1}(S/I)\leq b_{i,i+r-1}(S/I)$.
\end{compactenum}
We also say that $S/I$ satisfies the \Defn{property $\mathfrak{B}_s$} if for all $i<s$ one has $t_i(S/I)<r+i-1$. 
\end{df}

\begin{rem} \textbf{Relations to Poincar\'e duality and the Koszul property}
\begin{compactenum}[(1)] 
\item If $S/I$ is Gorenstein, then it satisfies the property $\mathfrak{A}$. However, the property only requires a much simpler property than Poincar\'{e}--Lefschetz duality; a simple inequality shall be enough, see Lemma \ref{Poincare}.
\item If $I$ is generated by quadratic monomials, then it is easy to see that it $t_s(S/I)\leq 2s$ for all~$s$ and therefore $S/I$ satisfies $\mathfrak{B}_{r-1}$. This fact is valid more generally when $S/I$ is Koszul as was shown by Backelin in \cite{backelin}, see also Kempf \cite{Kempf}. 
\end{compactenum}
\end{rem}
\begin{prop}\label{Diff}
Let $I$ be a graded ideal in polynomial ring $S$. Moreover, assume that $S/I$ has regularity $r$ and satisfies the properties $\mathfrak{A}$ and $\mathfrak{B}_s$. Then one has
\[ s\leq \mathit{lp}_r(S/I)-\mathit{lp}_2(S/I).
\]\end{prop}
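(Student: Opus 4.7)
The plan is to unwind the definitions and verify that the two hypotheses feed into each other in a single combinatorial step, with essentially no calculation. Writing $m := \mathit{lp}_r(S/I)$ and rephrasing the conclusion, I want to show that $b_{j,j+1}(S/I) = 0$ for every $j > m - s$, since $\mathit{lp}_2(S/I) = \max\{j : b_{j,j+1}(S/I) \neq 0\}$.

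First I would dispose of the range $j > m$ using property $\mathfrak{A}(1)$, which immediately gives $b_{j,j+1}(S/I) = 0$ there. So it remains to handle $m - s < j \leq m$. For such $j$, I would set $i = m - j \geq 0$ and invoke property $\mathfrak{A}(2)$ to obtain the key inequality
\[
b_{j,j+1}(S/I) \;=\; b_{m-i,m-i+1}(S/I) \;\leq\; b_{i,i+r-1}(S/I) \;=\; b_{m-j,(m-j)+r-1}(S/I).
\]

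The second step is to kill the right-hand side using property $\mathfrak{B}_s$. Indeed, $j > m - s$ translates to $m - j < s$, and then $\mathfrak{B}_s$ forces $t_{m-j}(S/I) < r + (m-j) - 1$, which means exactly that $b_{m-j,(m-j)+r-1}(S/I) = 0$. Combining with the inequality of the previous step yields $b_{j,j+1}(S/I) = 0$ throughout the remaining range, giving $\mathit{lp}_2(S/I) \leq m - s$, which is the desired bound $s \leq \mathit{lp}_r(S/I) - \mathit{lp}_2(S/I)$.

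There is no real obstacle here: the two properties $\mathfrak{A}$ and $\mathfrak{B}_s$ were tailored precisely so that the first gives a duality-style bound on entries of the linear strand by entries on the top strand, while the second empties out the relevant top-strand positions. The only thing to be careful about is the indexing, namely that the substitution $i = m - j$ keeps us inside the range where $\mathfrak{A}(2)$ is asserted, which is ensured by first handling $j > m$ separately via $\mathfrak{A}(1)$.
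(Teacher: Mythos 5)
Your proof is correct and is essentially the paper's argument in contrapositive form: the paper chains $\min/\max$ identities to show $\mathit{lp}_r-\mathit{lp}_2\ge\min\{k: b_{k,k+r-1}\neq 0\}\ge s$, which is exactly your step of bounding each linear-strand entry $b_{j,j+1}$ by the top-strand entry $b_{m-j,m-j+r-1}$ via $\mathfrak{A}$ and then killing the latter via $\mathfrak{B}_s$. No substantive difference.
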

\begin{proof}
We have
\begin{align*}
  \mathit{lp}_r(S/I)-\mathit{lp}_2(S/I)\ =&\ m-\max\{j\mid b_{j,j+1}(S/I)\neq 0\} &\\
\ =&\ \min\{m-j\mid b_{j,j+1}(S/I)\neq 0\} &\\
\ \geq&\ \min\{m-j\mid b_{m-j,m-j+r-1}(S/I)\neq 0\} &\text{(Property $\mathfrak{A}$)} \\
\ =&\ \min\{k\mid b_{k,k+r-1}(S/I)\neq 0\}&\\
\ \geq&\ \min\{k\mid t_k(S/I)\geq k+r-1\}&
\end{align*}
where the last term is at least $s$ by Property $\mathfrak{B}_s$. 
\end{proof}
By Bakelin's result the regularity of a Koszul ring is bounded above by its projective dimension. As an immediate consequence of the previous result, we get the following tight bound for Gorenstein Koszul rings.
\begin{cor}
The regularity of a Gorenstein Koszul ring $S/I$ is at most $\mathit{projdim}(S/I)-\mathit{lp}_2(S/I)+1.$ \qed
\end{cor}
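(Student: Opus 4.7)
The plan is to deduce this corollary as an almost immediate consequence of Proposition~\ref{Diff}, combined with the two observations in the remark preceding it.

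First, I would set $r$ to be the regularity of $S/I$ and unpack the two hypotheses through the remark. Since $S/I$ is Gorenstein, property $\mathfrak{A}$ holds (this is part~(1) of the remark). Since $S/I$ is Koszul, Backelin's bound gives $t_i(S/I)\le 2i$ for all $i$; this forces $t_i(S/I)<r+i-1$ whenever $2i<r+i-1$, i.e.\ whenever $i<r-1$, so property $\mathfrak{B}_{r-1}$ holds. This is exactly the content of part~(2) of the remark.

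Next, I would apply Proposition~\ref{Diff} with $s=r-1$. This yields
\[
r-1\ \le\ \mathit{lp}_r(S/I)-\mathit{lp}_2(S/I),
\]
which I rearrange to $r\le \mathit{lp}_r(S/I)-\mathit{lp}_2(S/I)+1$. Finally, by definition
\[
\mathit{lp}_r(S/I)\ =\ \max\{i\mid b_{i,i+r-1}(S/I)\neq 0\}\ \le\ \max\{i\mid b_{i,j}(S/I)\neq 0\text{ for some }j\}\ =\ \mathit{projdim}(S/I),
\]
and substituting this trivial bound on $\mathit{lp}_r$ gives the desired inequality
\[
r\ \le\ \mathit{projdim}(S/I)-\mathit{lp}_2(S/I)+1.
\]

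There is essentially no obstacle in the argument itself: all the work has already been done in Proposition~\ref{Diff} and in the remark. The only point that merits double-checking is the quantitative verification that the Koszul condition gives precisely $\mathfrak{B}_{r-1}$ (and not a weaker $\mathfrak{B}_s$ for smaller $s$), since applying Proposition~\ref{Diff} with a smaller $s$ would yield a strictly weaker bound. The computation $2i\le r+i-2\iff i\le r-2$ confirms that $s=r-1$ is the correct choice, and this is what makes the resulting bound tight.
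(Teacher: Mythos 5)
Your argument is correct and is exactly the intended one: the paper states this corollary as an immediate consequence of Proposition~\ref{Diff} (hence the bare \qed), relying on the preceding remark that Gorenstein implies $\mathfrak{A}$ and Koszul implies $t_i\le 2i$ and hence $\mathfrak{B}_{r-1}$, followed by the trivial bound $\mathit{lp}_r(S/I)\le\mathit{projdim}(S/I)$. Your verification that $s=r-1$ is the right (and sharpest) choice of $s$ is a useful sanity check but introduces nothing beyond the paper's route.
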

\begin{thm}\label{main}
Let $\Delta$ be a $(d-1)$-dimensional simplicial complex with nontrivial top-homology. Also, assume that $\k[\Delta]$ satisfies the properties $\mathfrak{A}$ and $\mathfrak{B}_s$. Then the underlying graph is $(d+s-1)$-connected.
\end{thm}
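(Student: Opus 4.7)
My plan is to assemble the two preceding propositions around a single numerical input, $\mathit{lp}_r(\k[\Delta])$. By Proposition~\ref{Connectivity-LStrand} the claim $\kappa \geq d + s - 1$ is equivalent to the algebraic inequality $\mathit{lp}_2(\k[\Delta]) \leq n - d - s$; and Proposition~\ref{Diff}, which applies because $\k[\Delta]$ satisfies $\mathfrak{A}$ and $\mathfrak{B}_s$ by hypothesis, already supplies $\mathit{lp}_2(\k[\Delta]) \leq \mathit{lp}_r(\k[\Delta]) - s$. So the whole question reduces to identifying the regularity $r$ and bounding $\mathit{lp}_r(\k[\Delta])$ by $n-d$.

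Identifying $r$ is routine with Hochster's formula. Since $\dim \Delta = d - 1$, no induced subcomplex can admit nonzero reduced homology above degree $d-1$, so $b_{i,j}(\k[\Delta])=0$ whenever $j - i - 1 > d - 1$, giving $r \leq d$. The nontrivial top-homology hypothesis applied at $W = [n]$ produces $b_{n-d,n}(\k[\Delta]) \geq \dim_{\k} \wt{H}_{d-1}(\Delta) > 0$, saturating this bound; hence $r = d$.

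The heart of the proof will be the estimate $\mathit{lp}_d(\k[\Delta]) \leq n-d$. Hochster's formula rewrites the left-hand side as $\max\{\,\#W - d + 1 : W \subseteq [n],\ \wt{H}_{d-2}(\Delta_W) \neq 0\,\}$, and the naive cardinality bound $\#W \leq n$ delivers only $\mathit{lp}_d \leq n-d+1$. The real obstruction is the borderline case $W = [n]$, i.e.\ whether $\wt{H}_{d-2}(\Delta) \neq 0$, and this is the main hurdle I foresee. I would attack it by playing the near-duality inequality~$\mathfrak{A}(2)$ against the extremal top-row Betti number $b_{n-d,n}(\k[\Delta]) \neq 0$ witnessed by the top homology, combined with the vanishing pattern that $\mathfrak{B}_s$ imposes at the top of the column $j = n$, in order to force $b_{n-d+1,n}(\k[\Delta]) = 0$.

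Granted $\mathit{lp}_r(\k[\Delta]) \leq n-d$, the conclusion is automatic: Proposition~\ref{Diff} yields $\mathit{lp}_2(\k[\Delta]) \leq n - d - s$, and Proposition~\ref{Connectivity-LStrand} then converts this into the desired connectivity bound $\kappa = n - 1 - \mathit{lp}_2(\k[\Delta]) \geq d + s - 1$.
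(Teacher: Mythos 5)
Your skeleton is exactly the paper's: compute the regularity $r=d$ from Hochster's formula and the nontrivial top homology, feed $\mathfrak{A}$ and $\mathfrak{B}_s$ into Proposition~\ref{Diff} to get $\mathit{lp}_2(\k[\Delta])\le \mathit{lp}_r(\k[\Delta])-s$, bound $\mathit{lp}_r(\k[\Delta])$ by $n-d$, and translate back through Proposition~\ref{Connectivity-LStrand}. Up to that point there is nothing to object to.

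The problem is the step you yourself call the heart. You leave it as a plan rather than an argument, and the plan would not work. The hypothesis $\mathfrak{B}_s$ only constrains homological degrees $i<s\le d-1$, nowhere near $i=n-d+1$; and $\mathfrak{A}(2)$ bounds linear-strand entries $b_{m-i,m-i+1}$ \emph{from above} by entries $b_{i,i+r-1}$, which is the wrong direction for forcing an entry of the $(r-1)$-shifted strand such as $b_{n-d+1,n}$ to vanish. Nor should it vanish: $b_{n-d+1,n}=\dim_\k\wt{H}_{d-2}(\Delta)$ can be nonzero for complexes the paper explicitly wants to cover (e.g.\ a minimal cycle triangulating $S^1\times S^{d-2}$), so no derivation of $\wt{H}_{d-2}(\Delta)=0$ from the hypotheses can exist. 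The difficulty you ran into is an artifact of reading the printed definition of $\mathit{lp}_j$ literally: the strand of the Betti table that carries the top homology --- the one that Lemma~\ref{Poincare} plus Hochster's formula dualizes against the linear strand to establish $\mathfrak{A}$, and the one whose length is meant by $m=\mathit{lp}_r$ --- consists of the entries $b_{i,i+d}$ (note $b_{n-d,n}$ has $j-i=d$, not $d-1$); the formula $b_{i,i+j-1}$ is off by one at the top row. For that strand the needed bound is immediate: $b_{i,i+d}=0$ as soon as $i+d>n$, simply because $[n]$ has no subset of cardinality $i+d$, so $m=n-d$ with no further work. The correct repair is therefore not to force $b_{n-d+1,n}=0$ (you cannot), but to identify $m$ with $n-d$; that is all the paper's one-line ``Note that $\mathit{lp}_d(\k[\Delta])=n-d$'' amounts to.
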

\begin{proof}
Note that the regularity of $\k[\Delta]$ is equal to $d$ since $\Delta$ has nontrivial top-homology. So, it follows from Proposition~\ref{Diff} that
\[ s\leq \mathit{lp}_d(\k[\Delta])-\mathit{lp}_2(\k[\Delta]).
\]
Note that $\mathit{lp}_d(\k[\Delta])=n-d$. So, by Corollary~\ref{Connectivity-LStrand} we get

\[ s\leq n-d -(n-\kappa_\Delta-1),
\]
where $\kappa_\Delta$ stands for the connectivity number of the underlying graph of $\Delta$. Therefore
\[ \kappa_\Delta\geq d+s-1. \qedhere
\]

\end{proof}
\begin{rem}
As a special case of Theorem~\ref{main}, we can consider $\Delta$ to be Gorenstein*. Then $\k[\Delta]$ satisfies the property $\mathfrak{B}_1$. Moreover, if $\Delta$ is also flag, then it satisfies the property $\mathfrak{B}_{d-1}$, since $t_i(\k[\Delta])\leq 2i$ for any $i$ and $\k[\Delta]$ is $d$-regular. 
\end{rem}

\section{A Poincar\'e--Lefschetz-type inequality for minimal cycles}

Recall that a \Defn{minimal $d$-cycle} $\Sigma$ (w.r.t.\ a coefficient ring $R$) is a pure $d$-dimensional complex that supports precisely one homology $d$-class $\zeta$ whose support is the complex itself.
For instance, every pseudomanifold is a minimal cycle (over $\mathbb{Z}/2\mathbb{Z}$); and so is every triangulation of a closed, connected manifold. 

\begin{lem}\label{Poincare}
Let $\Sigma$ denote any minimal $d$-cycle and $W$ a subset of the vertex-set $\mr{V}(\Sigma)$. Then
\[\mr{rk}\ \widetilde{H}_0(\Sigma_{W})\ \le\ \mr{rk}\ \widetilde{H}_{d-1}(\Sigma_{V(\Sigma)\setminus W})\]
\end{lem}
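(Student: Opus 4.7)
The strategy is to assign to every connected component of $\Sigma_W$ a $(d-1)$-cycle in $\Sigma_{\mr{V}(\Sigma)\setminus W}$, and then verify that the resulting homology classes span a submodule of $\wt{H}_{d-1}(\Sigma_{\mr{V}(\Sigma)\setminus W})$ whose rank equals the number of components of $\Sigma_W$ minus one. Since $\mr{rk}\,\wt{H}_0(\Sigma_W)$ is precisely that number whenever $\Sigma_W\neq\emptyset$ (and the inequality is vacuous otherwise), this proves the lemma.

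Fix the fundamental class $\zeta=\sum_F a_F\, F\in Z_d(\Sigma)$, with $a_F\neq 0$ on every facet $F$ of $\Sigma$ by minimality, and let $C_1,\dots,C_k$ be the connected components of $\Sigma_W$. For each $j$ define
\[
\zeta^{(j)}\ :=\ \sum_{F:\ \mr{V}(F)\cap \mr{V}(C_j)\neq\emptyset} a_F\, F.
\]
The crucial combinatorial fact is that for every facet $F$ of $\Sigma$ the set $\mr{V}(F)\cap W$ is itself a face of $\Sigma_W$, hence sits inside a single connected component of $\Sigma_W$. Consequently the $\zeta^{(j)}$ have pairwise disjoint supports and $\zeta-\sum_j\zeta^{(j)}=\eta_0\in C_d(\Sigma_{\mr{V}(\Sigma)\setminus W})$. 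Next I would check that $\partial\zeta^{(j)}\in C_{d-1}(\Sigma_{\mr{V}(\Sigma)\setminus W})$: for a $(d-1)$-face $\sigma$ with $\mr{V}(\sigma)\cap W\neq\emptyset$, either $\mr{V}(\sigma)\cap \mr{V}(C_j)\neq\emptyset$ --- so that every cofacet of $\sigma$ meets $\mr{V}(C_j)$ and the coefficient of $\sigma$ in $\partial\zeta^{(j)}$ equals its coefficient in $\partial\zeta=0$ --- or $\sigma$ carries a vertex in another component, in which case no cofacet $F$ of $\sigma$ can satisfy $\mr{V}(F)\cap \mr{V}(C_j)\neq\emptyset$ without forcing two vertices from distinct $\Sigma_W$-components into the single $\Sigma_W$-face $\mr{V}(F)\cap W$.

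The decomposition $\zeta=\sum_j\zeta^{(j)}+\eta_0$ together with $\partial\zeta=0$ gives the relation $\sum_j [\partial\zeta^{(j)}]=0$ in $\wt{H}_{d-1}(\Sigma_{\mr{V}(\Sigma)\setminus W})$. To show this is the only relation, I would suppose $\sum_j\alpha_j\partial\zeta^{(j)}=\partial\eta$ for some $\eta\in C_d(\Sigma_{\mr{V}(\Sigma)\setminus W})$; then $\xi:=\sum_j\alpha_j\zeta^{(j)}-\eta$ is a top-dimensional cycle of $\Sigma$. Because $C_{d+1}(\Sigma)=0$ and $\Sigma$ is minimal, $Z_d(\Sigma)=H_d(\Sigma)$ is generated by $\zeta$, so $\xi=\lambda\zeta$ for some scalar $\lambda$. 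Expanding and comparing coefficients on any facet $F$ with $\mr{V}(F)\cap \mr{V}(C_j)\neq\emptyset$ (such an $F$ exists because $\Sigma$ is pure and $C_j\neq\emptyset$, and any such $F$ is not in $\Sigma_{\mr{V}(\Sigma)\setminus W}$) yields $(\alpha_j-\lambda)a_F=0$, whence $\alpha_j=\lambda$ for all $j$. The classes $[\partial\zeta^{(1)}],\dots,[\partial\zeta^{(k)}]$ therefore span a submodule of rank $k-1$ in $\wt{H}_{d-1}(\Sigma_{\mr{V}(\Sigma)\setminus W})$, which is the desired bound.

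The main obstacle is the verification that $\partial\zeta^{(j)}$ already lies in $C_{d-1}(\Sigma_{\mr{V}(\Sigma)\setminus W})$; everything depends on the simple but essential observation that $\mr{V}(F)\cap W$ belongs to one connected component of $\Sigma_W$ for every facet $F$ of $\Sigma$. After that the computation is formal, and the only deeper ingredient is the defining minimality of $\Sigma$, which ensures that no ``extra'' top cycles contribute additional independent relations.
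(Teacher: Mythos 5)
Your argument is correct and is essentially the paper's own proof unpacked to the chain level: your chains $\zeta^{(j)}$ are exactly the relative cycles the paper uses to inject $H^0(\Sigma_W)$ into $H_d(\Sigma,\Sigma\setminus\Sigma_W)$, the boundaries $\partial\zeta^{(j)}$ realize the connecting homomorphism in the long exact sequence of the pair, and your ``only one relation'' step is the paper's combination of exactness with $\mr{rk}\,\widetilde{H}_d(\Sigma)=1$ (which, as you note, follows from minimality). The only practical difference is that by working directly with chains you sidestep the homotopy equivalence $\Sigma\setminus\Sigma_W\simeq\Sigma_{V(\Sigma)\setminus W}$ that the paper invokes without proof, at the cost of having to verify by hand the combinatorial facts about $V(F)\cap W$ lying in a single component, which you do correctly.
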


\begin{proof}
Since $\Sigma$ supports a global $d$-cycle (by minimality), we have an injection
\[{H}^0(\Sigma_{W})\ \longhookrightarrow \ {H}_d(\Sigma,\Sigma\setminus\Sigma_{W}).\]
To see this, notice that the restriction of the global $d$-cycle to any connected component of $\Sigma_{W}$ induces a relative cycle for $(\Sigma,\Sigma\setminus\Sigma_{W})$.

Now, since $\Sigma_{V(\Sigma)\setminus W}$ is homotopically equivalent to $\Sigma\setminus\Sigma_{W}$, the exact sequence
\[0\ \longrightarrow\ \widetilde{H}_d(\Sigma)\ \longrightarrow\ \widetilde{H}_d(\Sigma,\Sigma_{V(\Sigma)\setminus W})\ \longrightarrow\ \widetilde{H}_{d-1}(\Sigma_{V(\Sigma)\setminus W}) \ \longrightarrow\ \cdots,\]
implies 
\begin{align*}
&\mr{rk}\  \widetilde{H}_{d-1}(\Sigma_{V(\Sigma)\setminus W})+1\\
=\ &\mr{rk}\  \widetilde{H}_{d-1}(\Sigma_{V(\Sigma)\setminus W})+\mr{rk}\  \widetilde{H}_d(\Sigma)\\
\ge\ & \mr{rk}\  \widetilde{H}_d(\Sigma,\Sigma_{V(\Sigma)\setminus W})\\
\ge\ & \mr{rk}\  {H}^0(\Sigma_{W}). \qedhere
\end{align*}

%
%
%
\end{proof}

\section{Applications to connectivity of manifolds}

Let $\Delta$ be a $(d-1)$-dimensional simplicial complex
on the vertex set $\mr{V}(\Delta)$. Recall the notion of banner complexes of \cite{BV}:
\begin{compactitem}[$\circ$]
\item A subset $W$ of $\mr{V}(\Delta)$ is called \Defn{complete} if every
two vertices of $W$ form an edge of $\Delta$.
\item A complete set  $W \subseteq \mr{V}(\Delta)$ is \Defn{critical}
if $W \setminus \{v\}$ is a face of $\Delta$ for some $v \in W$.
\item We say that $\Delta$ is \Defn{banner}
if every critical complete set $W$ of size at least $d$ is a face of $\Delta$.
\item We define the \Defn{banner number} of $\Delta$ to be

\begin{eqnarray*}
\mr{b}(\Delta)&=&\min\left\{b \quad :
	\begin{array}{ll}
		\mr{lk}_\sigma \Delta \mbox{ is banner or the boundary of the $2$-simplex} \\
		\mbox{for all faces } \sigma\in\Delta \mbox{ of cardinality $b$ and degree $d$}
	\end{array}
\right\},
\end{eqnarray*}
where the \Defn{degree} of a face is the maximal cardinality of a facet containing it. 
\end{compactitem}

\noindent Note that our notions of banner complexes and banner numbers are slightly more general then the ones introduced in~\cite{BV}. However, if the complex is pure the definitions coincide.

\begin{lem}\label{banner}
Let $\Delta$ be a $(d-1)$-dimensional simplicial complex.
\begin{compactenum}[\rm(a)]
\item\label{a} If  $\sigma$ is a face of degree $d$ in $\Delta$, then $\mr{b}(\mr{lk}_\sigma \Delta)\le \max\{0,\mr{b}(\Delta)- \# \sigma\}$.
\item\label{b} If $\Delta$ has nontrivial top-homology and $\mr{b}(\Delta)<d-2$, then every induced subcomplex of $\Delta$ having nontrivial $(d-2)$-homology has at least $2d-2-\mr{b}(\Delta)$ vertices.
\end{compactenum}
\end{lem}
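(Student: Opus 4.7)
For part (a), set $b = \mr{b}(\Delta)$ and distinguish two cases. If $\#\sigma < b$, I would pick any face $\tau$ of $\mr{lk}_\sigma \Delta$ with $\#\tau = b - \#\sigma$ and of top degree $d - \#\sigma$ in $\mr{lk}_\sigma \Delta$; then $\sigma \cup \tau$ is a face of $\Delta$ of cardinality $b$ and degree $d$, so by definition of $\mr{b}(\Delta)$ the link $\mr{lk}_{\sigma \cup \tau} \Delta = \mr{lk}_\tau(\mr{lk}_\sigma \Delta)$ is banner or the boundary of the $2$-simplex, which gives $\mr{b}(\mr{lk}_\sigma \Delta) \leq b - \#\sigma$. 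If $\#\sigma \geq b$, write $\sigma = \tau \sqcup (\sigma \setminus \tau)$ with $\#\tau = b$; then $\mr{lk}_\tau \Delta$ is banner, and the goal reduces to the subclaim that if $\Delta'$ is banner and $\sigma'$ is a face of top degree in $\Delta'$, then $\mr{lk}_{\sigma'} \Delta'$ is again banner. This subclaim is straightforward: a critical complete set $U$ of sufficient size in $\mr{lk}_{\sigma'} \Delta'$ extends to a critical complete set $\sigma' \cup U$ in $\Delta'$ of size at least $\dim(\Delta') + 1$, which must be a face by the banner property of $\Delta'$.

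For part (b), write $b = \mr{b}(\Delta)$ and let $W$ be an inclusion-minimal subset with $\widetilde{H}_{d-2}(\Delta_W) \neq 0$. For any $v \in W$, the Mayer--Vietoris sequence applied to $\Delta_W = \Delta_{W \setminus \{v\}} \cup \mathrm{star}(v, \Delta_W)$ (second piece contractible, intersection equal to $(\mr{lk}_v \Delta)_{W \setminus \{v\}}$), combined with the minimality-forced vanishing $\widetilde{H}_{d-2}(\Delta_{W \setminus \{v\}}) = 0$, yields $\widetilde{H}_{d-3}((\mr{lk}_v \Delta)_{W \setminus \{v\}}) \neq 0$. Iterating this $b$ times, at each step choosing a vertex of top degree in the current link and re-minimizing the witness set, I would construct a face $\sigma \in \Delta$ of cardinality $b$ and degree $d$ together with a subset $W' \subseteq W \setminus \sigma$ for which $\widetilde{H}_{d-2-b}((\mr{lk}_\sigma \Delta)_{W'}) \neq 0$. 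By part (a) applied with $\#\sigma = b$, the link $\mr{lk}_\sigma \Delta$ has $\mr{b} = 0$ and is therefore banner, so the estimate from \cite{ACMF} for induced subcomplexes of banner complexes---any such with nontrivial $k$-th reduced homology has at least $2(k+1)$ vertices---yields $|W'| \geq 2(d - 1 - b)$, whence $|W| \geq |W'| + b \geq 2d - 2 - b$.

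The main obstacle is the iterative Mayer--Vietoris construction of $\sigma$: at each link-reduction step I must locate, within the current minimal witness set, a vertex whose degree matches the top dimension of the current link, so that part (a) can in turn lower the banner number by one. This is precisely where the hypothesis that $\Delta$ has nontrivial top homology enters---replacing $\Delta$ by the carrier of a top class allows us to take $\Delta$ itself to be pure $(d-1)$-dimensional, so every face appearing in a minimal $(d-2-i)$-witness is contained in a facet of size $d - i$ in the current iterated link, and the iteration remains well defined to the end.
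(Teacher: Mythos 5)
Your part~(a) is essentially a careful unpacking of what the paper dismisses as ``clear from the definition,'' and it is correct: the subclaim that the link of a top-degree face $\sigma'$ of a banner complex $\Delta'$ is again banner works because a critical complete set $U$ of $\mr{lk}_{\sigma'}\Delta'$ of size at least $\dim(\mr{lk}_{\sigma'}\Delta')+1$ extends to the critical complete set $\sigma'\cup U$ of $\Delta'$ of size at least $\dim(\Delta')+1$. You silently drop the alternative that an intermediate link could be the boundary of the $2$-simplex rather than banner, but links of nonempty faces of that complex are banner, so this is harmless.

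Part~(b) has a genuine gap at exactly the point where the real work lies. Your Mayer--Vietoris iteration on a minimal witness set correctly reduces the statement to: an induced subcomplex of the banner complex $\mr{lk}_\sigma\Delta$ with nontrivial homology in degree $d-2-b$ has at least $2(d-1-b)$ vertices. But you then dispose of this by citing an estimate from \cite{ACMF} for ``induced subcomplexes of banner complexes: any such with nontrivial $k$-th reduced homology has at least $2(k+1)$ vertices.'' That statement is false for banner complexes: the Remark immediately following this lemma in the paper exhibits a $(d+1)$-dimensional banner complex carrying a nontrivial $(d-1)$-cycle on only $d+3$ vertices. The \cite{ACMF} bound is for \emph{flag} complexes, where induced subcomplexes remain flag; bannerness is not hereditary in this way, and the bound survives only in the codimension-one case, which is precisely the $\mr{b}(\Delta)=0$ case of the lemma you are proving. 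The paper establishes it by a separate induction on $d$: take a minimal induced witness $\Gamma$, note that each vertex link of $\Gamma$ is an induced subcomplex with nontrivial $(d-3)$-homology of the corresponding vertex link of $\Delta$ (which is banner of one lower dimension), get $2d-4$ vertices there by induction, and add two more since the vertex is not a cone point of $\Gamma$. Without supplying this base case your argument is circular. A secondary issue: your purity reduction (``replace $\Delta$ by the carrier of a top class'') is unjustified, since passing to a subcomplex that is not induced can change both the banner number and the family of induced subcomplexes; the paper's corresponding reduction instead replaces $\Delta$ by an \emph{induced} subcomplex supporting a $(d-1)$-cycle, which remains banner of the same dimension.
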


\begin{proof}
The part~\eqref{a} is clear from the definition. For claim~\eqref{b}, let us first show that, if $\Delta$ is banner, then every induced subcomplex $\Gamma$ of $\Delta$ such that $\widetilde{H}_{d-2}(\Gamma)\neq 0$ has at least $2d-2$ vertices by induction on $d$. If $d=3$, this is clear because $\Delta$ is flag.

Let $d>3$. We may assume that no induced subcomplex of $\Delta$ has a nontrivial $(d-1)$-dimensional cycle: indeed, such a subcomplex is forced to have dimension $d-1$, so it would be banner and we could replace $\Delta$ with it. Furthermore, we may assume that $\Gamma$ is a minimal induced subcomplex with the property that $\widetilde{H}_{d-2}(\Gamma)\neq 0$. Under such a minimality assumption, the link of any vertex of $\Gamma$ admits a nontrivial homology cycle in dimension $d-3$. Take a vertex $v$ of $\Gamma$. Since $\mr{lk}_v \Gamma$ is an induced subcomplex of $\mr{lk}_v \Delta$, which is banner and admits a nontrivial $(d-2)$-cycle, by induction $\mr{lk}_v\Gamma$ has at least $2d-4$ vertices. Moreover, $v$ cannot be a cone point of $\Gamma$ because $\widetilde{H}_{d-2}(\Gamma)\neq 0$, so $\# {\mr V}(\Gamma)\geq 2d-2$. 

The claim~\eqref{b} now follows by induction on the banner number and claim~\eqref{a}. 
\end{proof}

\begin{rem}
While a flag simplicial complex (not necessarily of dimension $d-1$) supporting a nontrivial $(d-1)$-cycle has at least $2d$ vertices, this is false for banner complexes. Take the boundary of a $d$-simplex, and join one facet with an external edge: the resulting complex is a $(d+1)$-dimensional banner complex supporting a nontrivial $(d-1)$-cycle, but with only $d+3$ vertices.
\end{rem}

\begin{lem}\label{B}
Let $\Delta$ be a pure $(d-1)$-dimensional complex with nontrivial top-homology. If $\mr{b}(\Delta)<d-2$, then $\k[\Delta]$ satisfies the property $\mathfrak{B}_{d-\mr{b}(\Delta)-1}$. 
\end{lem}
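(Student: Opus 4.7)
The plan is to translate $\mathfrak{B}_{d-\mr{b}(\Delta)-1}$ into a homological vanishing statement via Hochster's formula. Since $\Delta$ has nontrivial top-homology, $\k[\Delta]$ has regularity $d$, and the required $t_i(\k[\Delta])<d+i-1$ for $i<d-\mr{b}(\Delta)-1$, read through Hochster's formula with $k=j-i-1$, is exactly the implication
\[\wt{H}_k(\Delta_W)\neq 0 \ \text{ and }\ k\geq d-2\ \Longrightarrow\ \#W\geq k+d-\mr{b}(\Delta).\]
Because $\dim\Delta\leq d-1$ kills $\wt{H}_k$ for $k\geq d$, only $k=d-2$ and $k=d-1$ are at stake. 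The case $k=d-2$ is Lemma~\ref{banner}\eqref{b} verbatim, so the work reduces to the following claim:
\begin{quote}
$(\ast)$ For every pure $(e-1)$-dimensional $\Lambda$ with $\mr{b}(\Lambda)<e-2$, any induced subcomplex $\Lambda_U$ with $\wt{H}_{e-1}(\Lambda_U)\neq 0$ has $\#U\geq 2e-1-\mr{b}(\Lambda)$.
\end{quote}

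I would prove $(\ast)$ by induction on $e$ via a link-deletion argument. Shrink $U$ to be minimal with $\wt{H}_{e-1}(\Lambda_U)\neq 0$; for any $v\in U$, minimality forces $\wt{H}_{e-1}(\Lambda_{U\setminus \{v\}})=0$, and the identification $H_\bullet(\Lambda_U,\mr{del}_v\Lambda_U)\cong\wt{H}_{\bullet-1}(\mr{lk}_v\Lambda_U)$ in the long exact sequence of $(\Lambda_U,\mr{del}_v\Lambda_U)$ yields an injection $\wt{H}_{e-1}(\Lambda_U)\hookrightarrow\wt{H}_{e-2}\bigl((\mr{lk}_v\Lambda)_{U'}\bigr)$, where $U':=V(\mr{lk}_v\Lambda_U)\subseteq U\setminus\{v\}$. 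The link $\mr{lk}_v\Lambda$ is pure of dimension $e-2$, and Lemma~\ref{banner}\eqref{a} supplies $\mr{b}(\mr{lk}_v\Lambda)\leq\max\{0,\mr{b}(\Lambda)-1\}<e-3$, so the inductive hypothesis applies to $(\mr{lk}_v\Lambda,U')$. When $\mr{b}(\Lambda)\geq 1$ it yields $\#U'\geq 2(e-1)-1-\mr{b}(\mr{lk}_v\Lambda)\geq 2e-2-\mr{b}(\Lambda)$, and the inclusion $U'\subseteq U\setminus\{v\}$ delivers $\#U\geq\#U'+1\geq 2e-1-\mr{b}(\Lambda)$ at once.

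The $\mr{b}(\Lambda)=0$ regime (and the base $e=3$) require an extra ingredient. When $e=3$ the hypothesis forces $\Lambda$ to be flag, so the remark preceding Lemma~\ref{B} immediately gives $\#U\geq 2e\geq 2e-1$. When $e\geq 4$ and $\mr{b}(\Lambda)=0$, the inductive step as above only yields $\#U\geq 2e-2$, short by one vertex. This gap is closed by showing that $U$ cannot be complete in $\Lambda$: if it were, any $(e-1)$-face $\sigma$ of $\Lambda_U$ (present because $\wt{H}_{e-1}(\Lambda_U)\neq 0$) together with any $u\in U\setminus\sigma$ would form a critical complete set $\sigma\cup\{u\}$ of size $e+1$, which the banner hypothesis would force to lie in $\Lambda$, contradicting pure $(e-1)$-dimensionality. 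Hence some pair $a,b\in U$ is a non-edge, and choosing $v=a$ makes $b\in U\setminus\{v\}$ miss $U'$, improving the estimate to $\#U\geq\#U'+2\geq 2e-1$. The principal obstacle I anticipate is precisely this $\mr{b}(\Lambda)=0$ case: the purely algebraic induction through the link loses exactly one vertex there, and the non-completeness observation above (crucially exploiting both purity and the banner condition) is what recovers it.
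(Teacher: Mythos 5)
Your proof is correct, and on the strand the paper itself treats it is the same argument: translate property $\mathfrak{B}_{d-\mr{b}(\Delta)-1}$ through Hochster's formula into a lower bound on $\#W$ for induced subcomplexes $\Delta_W$ with nontrivial $(d-2)$-homology, and quote part~(b) of Lemma~\ref{banner}. Where you go beyond the paper is in noticing that $t_i(\k[\Delta])<d+i-1$ also requires $b_{i,i+d}(\k[\Delta])=0$, i.e.\ the case $k=d-1$ of your displayed implication: since $\mr{reg}\,\k[\Delta]=d$ only yields $t_i\le i+d$, an induced subcomplex with nontrivial top homology on few vertices is not excluded a priori, and the paper's one-sentence proof silently skips this strand. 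Your claim $(\ast)$ fills the gap correctly, by the same minimal-subcomplex/link induction that the paper uses to prove Lemma~\ref{banner}(b); the delicate point you rightly isolate is the $\mr{b}(\Lambda)=0$ regime, where the link induction loses one vertex, and your observation that a minimal top-cycle in a pure banner complex cannot have a complete vertex set (else a critical complete set of size $e+1$ would be forced to be a face) recovers it. Two small remarks: first, your base case $e=3$ leans on the unproved assertion of the Remark that a flag complex supporting a $2$-cycle has at least $6$ vertices, but the paper itself relies on exactly this fact (via \cite{ACMF}) in the base case of Lemma~\ref{banner}(b), so this is consistent with the paper's conventions; second, the omission you repair is harmless for the paper's applications, because Proposition~\ref{Diff} only uses $\mathfrak{B}_s$ through $\min\{k\mid b_{k,k+r-1}(S/I)\neq 0\}\ge s$, which is precisely what the shorter argument establishes --- but as a proof of Lemma~\ref{B} as literally stated, yours is the complete one.
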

\begin{proof}
Notice that the regularity of $\k[\Delta]$ is $d$, since $\Delta$ has a nontrivial top-homology. If $b_{i,i+d-1}(\k[\Delta])\neq 0$, by Hochster's formula there exists a subset $W\subseteq V(\Delta)$ of cardinality $i+d-1$ such that $\Delta_W$ supports a nontrivial $(d-2)$-cycle. By part~(b) of Lemma~\ref{banner}, thus:
\[i\geq d-\mr{b}(\Delta)-1 . \qedhere\]
\end{proof}

\begin{thm}\label{thm:bannerconnect}
Let $\Delta$ be an $(d-1)$-dimensional minimal cycle. Then the underlying graph of $\Delta$ is $(2d-\mr{b}(\Delta)-2)$-connected.
\end{thm}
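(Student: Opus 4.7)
The plan is to derive the theorem from Theorem~\ref{main} applied with $s = d - \mathrm{b}(\Delta) - 1$: once $\k[\Delta]$ is shown to satisfy properties $\mathfrak{A}$ and $\mathfrak{B}_{d-\mathrm{b}(\Delta)-1}$, Theorem~\ref{main} delivers $\kappa_\Delta \geq d + s - 1 = 2d - \mathrm{b}(\Delta) - 2$, as required. I will assume $d \geq 2$, as the case $d = 1$ is vacuous.

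Property $\mathfrak{B}_{d-\mathrm{b}(\Delta)-1}$ is furnished directly by Lemma~\ref{B}, whose hypothesis $\mathrm{b}(\Delta) < d - 2$ is the only nontrivial range: when $\mathrm{b}(\Delta) \geq d - 2$ the target bound $2d - \mathrm{b}(\Delta) - 2$ drops below $d$, and the weaker inequality $\kappa_\Delta \geq d$ already follows from Theorem~\ref{main} applied with $s = 1$ (property $\mathfrak{B}_1$ being automatic for a Stanley--Reisner ring, since $t_0(\k[\Delta]) = 0 < d - 1$).

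The substantive step is therefore the verification of property $\mathfrak{A}$, which is exactly what Lemma~\ref{Poincare} is built for. Since $\Delta$ is a $(d-1)$-dimensional minimal cycle, $\widetilde{H}_{d-1}(\Delta) \neq 0$; Hochster's formula applied to $W = V(\Delta)$ then produces a nonzero Betti entry in position $(n-d, n)$, so the top row of the Betti table has length $m = n - d$. I then expand both sides of condition $\mathfrak{A}(2)$ using Hochster,
\[b_{m-i,\,m-i+1}(\k[\Delta]) \ =\ \sum_{\#W=m-i+1} \dim_\k \widetilde{H}_0(\Delta_W), \qquad b_{i,\,i+d-1}(\k[\Delta]) \ =\ \sum_{\#W'=i+d-1} \dim_\k \widetilde{H}_{d-2}(\Delta_{W'}),\]
and use the cardinality identity $(m - i + 1) + (i + d - 1) = m + d = n$: the complementation $W \mapsto V(\Delta) \setminus W$ is a bijection between the indexing sets of the two sums. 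Lemma~\ref{Poincare} supplies the pointwise bound $\widetilde{H}_0(\Delta_W) \leq \widetilde{H}_{d-2}(\Delta_{V(\Delta) \setminus W})$, and summing over $W$ yields $\mathfrak{A}(2)$. Property $\mathfrak{A}(1)$ follows by letting $i < 0$ in $\mathfrak{A}(2)$, since the right-hand side then vanishes for support reasons, giving $b_{j, j+1}(\k[\Delta]) = 0$ for $j > m$, i.e.\ $\mathit{lp}_2(\k[\Delta]) \leq m$.

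The principal obstacle is merely the clean execution of this Hochster--Poincar\'e comparison; once property $\mathfrak{A}$ is secured, Theorem~\ref{main} and Lemma~\ref{B} jointly complete the proof.
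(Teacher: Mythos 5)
Your proposal is correct and follows essentially the same route as the paper: property $\mathfrak{B}$ from Lemma~\ref{B} (with the degenerate case $\mr{b}(\Delta)\ge d-2$ handled via $\mathfrak{B}_1$, matching the paper's first case), property $\mathfrak{A}$ from Lemma~\ref{Poincare} combined with Hochster's formula, and then Theorem~\ref{main}. The only difference is that you spell out the complementation bijection behind the verification of $\mathfrak{A}$, which the paper states in one line; that added detail is accurate (modulo the paper's own indexing conventions for $\mathit{lp}_j$, which you inherit faithfully).
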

\begin{proof}
If $\mr{b}(\Delta)=d-2$, then it is easy to see that $\k[\Delta]$ satisfies $\mathfrak{B}_1$. By Lemma~\ref{Poincare} and Hochster's formula, $\k[\Delta]$ satisfies also the property $\mathfrak{A}$. Therefore, the result follows from Theorem~\ref{main}.

If $\mr{b}(\Delta)<d-2$, by Lemmata~\ref{Poincare} (together with Hochster's formula) and~\ref{B}, $\k[\Delta]$ satisfies the properties $\mathfrak{A}$ and $\mathfrak{B}_{d-\mr{b}(\Delta)-1}$. Therefore, the result follows from Theorem~\ref{main}.
\end{proof}
\begin{cor}
Let $\Delta$ be a flag (or more generally banner) $(d-1)$-dimensional minimal cycle. Then the underlying graph of $\Delta$ is $(2d-2)$-connected.
\end{cor}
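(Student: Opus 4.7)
The plan is to deduce the corollary essentially as a direct application of Theorem~\ref{thm:bannerconnect} after computing the banner number of a banner complex. Specifically, I would argue that any banner $(d-1)$-dimensional minimal cycle $\Delta$ satisfies $\mr{b}(\Delta) = 0$; substituting into Theorem~\ref{thm:bannerconnect} then yields the desired $(2d-2)$-connectivity.

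First I would dispense with the flag case by reducing it to the banner case. If $\Delta$ is flag, then by definition every complete subset of $V(\Delta)$ is a face; in particular every \emph{critical} complete set is a face, so $\Delta$ is banner. Hence the hypothesis ``banner'' covers both statements of the corollary, and we may assume throughout that $\Delta$ is banner. Next I would unwind the definition of $\mr{b}(\Delta)$ at $b=0$. Because $\Delta$ is a minimal cycle, it is pure of dimension $d-1$, so every facet has cardinality $d$; in particular the empty face $\emptyset \in \Delta$ has degree $d$. The link $\mr{lk}_\emptyset \Delta$ equals $\Delta$ itself, which is banner by hypothesis. Thus the defining condition is already met for $b=0$, and therefore $\mr{b}(\Delta) = 0$.

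Finally, I would invoke Theorem~\ref{thm:bannerconnect}, which guarantees that the underlying graph of $\Delta$ is $(2d-\mr{b}(\Delta)-2)$-connected. With $\mr{b}(\Delta) = 0$ this bound becomes $2d-2$, which is precisely the conclusion sought.

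There is no substantive obstacle in this argument; the entire proof is a one-line application of the main theorem once the definitions are unpacked. The only mild point to verify is the bookkeeping at $b=0$ (that the empty face has degree $d$ and that $\mr{lk}_\emptyset \Delta = \Delta$), together with the trivial observation that flagness implies the banner property, so that the statement for flag complexes really follows from the banner statement.
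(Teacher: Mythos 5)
Your proposal is correct and matches the paper's proof, which likewise reduces everything to the single observation that a banner complex has banner number $0$ and then invokes Theorem~\ref{thm:bannerconnect}; your additional bookkeeping (flag implies banner, the empty face has degree $d$ and link equal to $\Delta$) just makes explicit what the paper leaves implicit.
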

\begin{proof}
If $\Delta$ is a banner complex, then $b(\Delta)=0$. 
\end{proof}
%

{\small
\bibliographystyle{myamsalpha}
\bibliography{References}}

\end{document}